\begin{document}

\title{An Undecidable Nested Recurrence Relation}

\author{Marcel Celaya and Frank Ruskey\thanks{Research supported in part by an NSERC Discovery Grant.}}%
\institute{Department of Computer Science, University of Victoria, CANADA}
\authorrunning{Celaya and Ruskey}


\maketitle

\begin{abstract}
Roughly speaking, a recurrence relation is \emph{nested} if it contains a subexpression of the
  form $\ldots A( \ldots A( \ldots ) \ldots )$.
Many nested recurrence relations occur in the literature, and determining their behavior seems
  to be quite difficult and highly dependent on their initial conditions.
A nested recurrence relation $A(n)$ is said to be \emph{undecidable} if the following problem
  is undecidable: given a finite set of initial conditions for $A(n)$, is the recurrence
  relation calculable?
Here \emph{calculable} means that for every $n\geq0$, either $A(n)$ is an initial condition or
the calculation of $A(n)$ involves only invocations of $A$ on arguments in $\left\{0,1,\ldots,n-1\right\}$.
We show that the recurrence relation
\begin{align*}
A\left(n\right) = & A\left(n-4-A\left(A\left(n-4\right)\right)\right)+4A\left(A\left(n-4\right)\right)\\
                  & +A\left(2A\left(n-4-A\left(n-2\right)\right)+A\left(n-2\right)\right)\mbox{.}
\end{align*}
is undecidable by showing how it can be used, together with carefully chosen initial conditions,
  to simulate Post 2-tag systems, a known Turing complete problem.
\end{abstract}

\keywords{Nested recurrence relation, undecidable problem, Turing complete problem, tag system, Hofstadter $Q$-sequence.}

\section{Introduction}

In the defining expression of a recurrence relation $R\left(n\right)$,
one finds at least one application of $R$ to some function of $n$.
The Fibonacci numbers, for example, satisfy the recurrence $F\left(n\right)=F\left(n-1\right)+F\left(n-2\right)$
for $n\geq2$. A recurrence relation $R\left(n\right)$ is called
\emph{nested} when the defining expression
of $R$ contains at least two applications of $R$, one of which is
contained in the argument of the other.

Many sequences defined in terms of nested recurrences have been studied
over the years. One famous example is Hofstadter's $Q$ sequence,
which is defined by the recurrence
\begin{equation}
Q\left(n\right)=Q\left(n-Q\left(n-1\right)\right)+Q\left(n-Q\left(n-2\right)\right)\mbox{,}
\label{eq:HofQ}
\end{equation}
with initial conditions $Q\left(1\right)=Q\left(2\right)=1$.
This sequence is very chaotic, and a plot of the sequence demonstrates seemingly
unpredictable fluctuation about the line $y=x/2$. It remains an open
question whether $Q$ is defined on all positive integers, despite
its introduction in \cite{hofstadter_goe_1999} over 30 years ago.
Indeed, if it happens that there exists some $m$ such that $m<Q\left(m-1\right)$
or $m<Q\left(m-2\right)$, then the calculation of $Q\left(m\right)$
would require an application of $Q$ to a negative integer outside
its domain. While little is known about the $Q$ sequence, other initial conditions that give rise to
much better behaved sequences that also satisfy the $Q$ recurrence
have been discovered \cite{golomb_discrete_1991}, \cite{ruskey_fibonacci_2011}.

Another sequence defined in terms of a nested recurrence is the Conway-Hofstadter
sequence
\begin{equation}
C\left(n\right)=C\left(C\left(n-1\right)\right)+C\left(n-C\left(n-1\right)\right)\mbox{,}
\label{eq:HofC}
\end{equation}
with initial conditions $C\left(1\right)=C\left(2\right)=1$. Unlike
the $Q$ sequence, this sequence is known to be well-defined for $n\geq1$,
and in fact Conway proved that $\lim_{n\rightarrow\infty}C\left(n\right)/n=1/2$.
Plotting the function $C\left(n\right)-n/2$ reveals a suprising,
fractal-like structure. This sequence is analyzed in depth in \cite{kubo_conways_1996}.

Another sequence whose structure is mainly understood, but is extraordinarily complex,
  is the Hofstadter-Huber $V$ sequence, defined by
\begin{equation}
V(n) = V(n-V(n-1)) + V(n-V(n-4)), \text{ with } V(1)=V(2)=V(3)=V(4)=1.
\label{eq:HofV}
\end{equation}
It was first analyzed by Balamoham, Kuznetsov and Tanny \cite{BalKuzTanny} and recently
  Allouche and Shallit showed that it is 2-automatic \cite{AllShallitV}.

Some of these nested recurrences are well-behaved enough to have closed
forms. Hofstadter's $G$ sequence, for example, is defined by
\begin{equation}
G\left(n\right)=n-G\left(G\left(n-1\right)\right), \text{ with } G(0)=0.
\label{eq:HofG}
\end{equation}
This sequence has closed form $G\left(n\right)=\left\lfloor \left(n+1\right)/\phi\right\rfloor $,
where $\phi$ is the golden ratio \cite{downey_family_1982}. A sequence
due to Golomb \cite{golomb_discrete_1991}, defined by $G\left(1\right)=1$
and $G\left(n\right)=G\left(n-G\left(n-1\right)\right)+1$ when $n>1$,
is the unique increasing sequence in which every $n\geq1$ appears
$n$ times, and has closed form
$G\left(n\right)=\left\lfloor (1+ \sqrt{8n} )/2 \right\rfloor$.

Despite their wide variation in behaviour, all of these recursions
are defined in terms of only three simple operations: addition, subtraction,
and recurrence application. Of these, the latter operation makes them
reminiscent of certain discrete systems---particularly the cellular
automaton. Consider, for instance, the $Q$ sequence defined above. It is computed at
any point by looking at the two values immediately preceeding that point, and using them as
{}``keys'' for a pair of {}``lookups'' on a list of previously
computed values, the results of which are summed together as the next
value. It is well-known that many cellular automata are Turing complete;
an explanation of how to simulate any Turing machine in a suitably-defined
one-dimensional cellular automaton is given in \cite{smith_simple_1968}.
With respect to nested recurrences, therefore, two questions naturally
arise. First, does there exist, in some sense, a computationally universal
nested recurrence defined only in terms of the aforementioned three
operations? Second, given a nested recurrence, is it capable of universal
computation? In this paper we aim to clarify the first question and
answer it in the positive.

\section{Tag Systems}

The \emph{tag system,} introduced by Emil Post in \cite{post_formal_1943},
is a very simple model of computation. It has been used in many instances
to prove that some mathematical object is Turing complete. This was
done, for example, with the one-dimensional cellular automaton known
as Rule 110; the proof uses a variant of the tag system \cite{cook_universality_2004}.

Such a system consists of a finite alphabet of symbols $\Sigma$,
a set of production rules $\Delta:\Sigma\rightarrow\Sigma^{*}$, and
an initial word $W_{0}$ from $\Sigma^{*}$. Computation begins with
the initial word $W_{0}$, and at each step of the computation the
running word $w_{1}\ldots w_{k}$ is transformed by the operation
\[
w_{1}\ldots w_{k}\vdash w_{3}\ldots w_{k}\Delta\left(w_{1}\right)\mbox{.}\]
In other words, at each step, the word is truncated by two symbols
on the left but extended on the right according to the production
rule of the first truncated symbol.
In this paper, we adopt the convention that lowercase letters represent
individual symbols while uppercase letters represent words.

If the computation at some point yields a word of length 1, truncation
of the first two symbols cannot occur; it is at this point the system
is said to \emph{halt}. The \emph{halting problem for tag systems}
asks: given a tag system, does it halt? As is the case for Turing
machines, the halting problem for tag systems is undecidable \cite{minsky_recursive_1961}.

Although this definition of tag systems has \emph{two} symbols deleted
at each step, there's no reason why this number need be fixed at two.
In general, an $m$\emph{-tag system} is a tag system where at each
step $m$ symbols are removed from the beginning of the running word.
The number $m$ is called the \emph{deletion number} of the tag system.
It is known that $m=2$ is the smallest number for which $m$-tag
systems are universal \cite{cocke_universality_1964}. Thus, only
2-tag systems are considered in the remainder of these notes.
\begin{example}
\label{exa:tagsys}
Two tag systems are depicted below, both of which share alphabet $\Sigma=\left\{ a,b,c\right\} $
and production rules $\Delta$ given by\[
a\rightarrow abb\qquad b\rightarrow c\qquad c\rightarrow a\mbox{.}\]
The initial word of the left tag system is $abcb$, while the initial
word of the right tag system is $abab$. The left tag system never
halts and is in fact periodic, while the right tag system halts after
11 steps.

%
\begin{quotation}
\noindent \raggedright{}%
\begin{minipage}[t]{0.5\columnwidth}%
\begin{quotation}
\noindent \begin{flushleft}
\texttt{abcb}~\\
\texttt{\hspace*{1em}cbabb}~\\
\texttt{\hspace*{1em}\hspace*{1em}abba}~\\
\texttt{\hspace*{1em}\hspace*{1em}\hspace*{1em}baabb}~\\
\texttt{\hspace*{1em}\hspace*{1em}\hspace*{1em}\hspace*{1em}abbc}~\\
\texttt{\hspace*{1em}\hspace*{1em}\hspace*{1em}\hspace*{1em}\hspace*{1em}bcabb}~\\
\texttt{\hspace*{1em}\hspace*{1em}\hspace*{1em}\hspace*{1em}\hspace*{1em}\hspace*{1em}abbc}~\\
\texttt{\hspace*{1em}\hspace*{1em}\hspace*{1em}\hspace*{1em}\hspace*{1em}\hspace*{1em}\hspace*{1em}$\cdots$}
\par\end{flushleft}
\end{quotation}
\end{minipage}%
\begin{minipage}[t]{0.5\columnwidth}%
\begin{quotation}
\noindent \begin{flushleft}
\texttt{abab}~\\
\texttt{\hspace*{1em}ababb}~\\
\texttt{\hspace*{1em}\hspace*{1em}abbabb}~\\
\texttt{\hspace*{1em}\hspace*{1em}\hspace*{1em}babbabb}~\\
\texttt{\hspace*{1em}\hspace*{1em}\hspace*{1em}\hspace*{1em}bbabbc}~\\
\texttt{\hspace*{1em}\hspace*{1em}\hspace*{1em}\hspace*{1em}\hspace*{1em}abbcc}~\\
\texttt{\hspace*{1em}\hspace*{1em}\hspace*{1em}\hspace*{1em}\hspace*{1em}\hspace*{1em}bccabb}~\\
\texttt{\hspace*{1em}\hspace*{1em}\hspace*{1em}\hspace*{1em}\hspace*{1em}\hspace*{1em}\hspace*{1em}cabbc}~\\
\texttt{\hspace*{1em}\hspace*{1em}\hspace*{1em}\hspace*{1em}\hspace*{1em}\hspace*{1em}\hspace*{1em}\hspace*{1em}bbca}~\\
\texttt{\hspace*{1em}\hspace*{1em}\hspace*{1em}\hspace*{1em}\hspace*{1em}\hspace*{1em}\hspace*{1em}\hspace*{1em}\hspace*{1em}cac}~\\
\texttt{\hspace*{1em}\hspace*{1em}\hspace*{1em}\hspace*{1em}\hspace*{1em}\hspace*{1em}\hspace*{1em}\hspace*{1em}\hspace*{1em}\hspace*{1em}ca}~\\
\texttt{\hspace*{1em}\hspace*{1em}\hspace*{1em}\hspace*{1em}\hspace*{1em}\hspace*{1em}\hspace*{1em}\hspace*{1em}\hspace*{1em}\hspace*{1em}\hspace*{1em}a}
\par\end{flushleft}
\end{quotation}
\end{minipage}
\end{quotation}


\end{example}

\section{A Modified Tag System}

The goal of this paper is to show that the recurrence given in the abstract can simulate 
some universal model of computation. In particular, we wish to show
that if we encode the specification of some abstract machine as initial conditions 
for our recurrence, then the resulting sequence produced by the recurrence will somehow encode
every step of that machine's computation. The tag system
model seems like a good candidate for this purpose, since the entire
run of a tag system can be represented by a single, possibly infinite
string we'll call the \emph{computation string}. For example, the
string corresponding to the tag system above and to the right is $ababbabbabbccabbcacaa$. A specially-constructed
nested recurrence $A$ would need only generate such a string on $\mathbb{N}=\left\{ 0,1,2,\ldots\right\} $
to simulate a tag system; each symbol would be suitably encoded as
an integer.

Ideally, the sequence defined by the nested recurrence can be calculated one integer at a time
using previously computed values. It would therefore make sense to
find some tag-system-like model of computation capable of generating
these strings one symbol at a time. That way, the computation of the
$n$\textsuperscript{th} symbol of a string in this new model can
correspond to the calculation of $A\left(n\right)$ (or, more likely,
some argument linear in $n$). With this motivation in mind, we now
introduce a modification of the tag system model.

A \emph{reverse tag system} consists of a finite set of symbols $\Sigma$,
a set of production rules $\delta:\Sigma^{2}\rightarrow\Sigma$, a
function $d:\Sigma\rightarrow\mathbb{N}$, and an initial word $W_{0}\in\Sigma^{*}$.
While an ordinary tag system modifies a word by removing a \emph{fixed}
number of symbols from the beginning and adding a \emph{variable}
number of symbols to the end, the situation is reversed in a reverse
tag system.

A single computation step of a reverse tag system is described by
the operation\[
w_{1}\ldots w_{k}\vdash w_{d\left(y\right)+1}\ldots w_{k}y\mbox{,}\]
where $y=\delta\left(w_{1},w_{k}\right)$. Given a word that starts
with $w_{1}$ and ends with $w_{k}$, the production rule for the
pair $\left(w_{1},w_{k}\right)$ yields a symbol $y$ which is appended
to the end of the word. Then, the first $d\left(y\right)$ symbols
are removed from the beginning of the word. The number $d\left(s\right)$
we'll call the \emph{deletion number} of the symbol $s$. If at some
point the deletion number of $y$ exceeds $k$, then the reverse tag
system \emph{halts}.
\begin{example}
\label{exa:revtagsys}
Let $\Sigma=\left\{ a,b\right\} $, $d\left(a\right)=0$, and $d\left(b\right)=2$.
Define $\delta$ by\[
\left(a,a\right)\rightarrow b\qquad\left(a,b\right)\rightarrow b\qquad\left(b,a\right)\rightarrow b\qquad\left(b,b\right)\rightarrow a\mbox{.}\]
It takes 12 steps before this reverse tag system with initial word
$W_{0}=baaab$ becomes periodic.%
\begin{quotation}
\noindent \raggedright{}\texttt{baaab}~\\
\texttt{baaaba}~\\
\texttt{\hspace*{1em}aabab}~\\
\texttt{\hspace*{1em}\hspace*{1em}babb}~\\
\texttt{\hspace*{1em}\hspace*{1em}babba}~\\
\texttt{\hspace*{1em}\hspace*{1em}\hspace*{1em}bbab}~\\
\texttt{\hspace*{1em}\hspace*{1em}\hspace*{1em}bbaba}~\\
\texttt{\hspace*{1em}\hspace*{1em}\hspace*{1em}\hspace*{1em}abab}~\\
\texttt{\hspace*{1em}\hspace*{1em}\hspace*{1em}\hspace*{1em}\hspace*{1em}abb}~\\
\texttt{\hspace*{1em}\hspace*{1em}\hspace*{1em}\hspace*{1em}\hspace*{1em}\hspace*{1em}bb}~\\
\texttt{\hspace*{1em}\hspace*{1em}\hspace*{1em}\hspace*{1em}\hspace*{1em}\hspace*{1em}bba}~\\
\texttt{\hspace*{1em}\hspace*{1em}\hspace*{1em}\hspace*{1em}\hspace*{1em}\hspace*{1em}\hspace*{1em}ab}~\\
\texttt{\hspace*{1em}\hspace*{1em}\hspace*{1em}\hspace*{1em}\hspace*{1em}\hspace*{1em}\hspace*{1em}\hspace*{1em}b}~\\
\texttt{\hspace*{1em}\hspace*{1em}\hspace*{1em}\hspace*{1em}\hspace*{1em}\hspace*{1em}\hspace*{1em}\hspace*{1em}ba}~\\
\texttt{\hspace*{1em}\hspace*{1em}\hspace*{1em}\hspace*{1em}\hspace*{1em}\hspace*{1em}\hspace*{1em}\hspace*{1em}\hspace*{1em}b}~\\
\texttt{\hspace*{1em}\hspace*{1em}\hspace*{1em}\hspace*{1em}\hspace*{1em}\hspace*{1em}\hspace*{1em}\hspace*{1em}\hspace*{1em}$\cdots$}
\end{quotation}


\end{example}

\section{Simulating a Tag System with a Reverse Tag System}

Consider a tag system $T=\left(\Sigma,\Delta,W_{0}\right)$ such that
each production rule of $\Delta$ yields a nonempty string. The goal
of this section is to construct a reverse tag system $R=\left(\Sigma',\delta,d,W_{0}'\right)$
which simulates $T$.

This construction begins with $\Sigma'$. Some notation will be useful
to represent the elements that are to appear in $\Sigma'$. Let $\left[s\right]_{j}$
denote the symbol {}``$s_{j}$'', where $s$ is a symbol in $\Sigma$
and $j$ is an integer.

For each $s_{i}\in\Sigma$, write $\Delta\left(s_{i}\right)$ as $s_{i,\ell_{i}}\ldots s_{i,2}s_{i,1}$.
For each symbol $s_{i,j}$ in this word, the symbol $\left[s_{i,j}\right]_{j}$
shall appear in $\Sigma'$. For example, if $a\rightarrow abc$ is
a production rule of $\Delta$, then $\Sigma'$ contains the three
symbols $\left[a\right]_{3}$, $\left[b\right]_{2}$, and $\left[c\right]_{1}$.
If $W_{0}=q_{1}q_{2}\ldots q_{m}$, the symbols $\left[q_{1}\right]_{1},\left[q_{2}\right]_{1},\ldots,\left[q_{m}\right]_{1}$
are also included in $\Sigma'$. Constructed this way, $\Sigma'$
contains no more symbols than the sum of the lengths of the words
in $\Delta\left(\Sigma\right)$ and the word $W_{0}$.

The production rules of $\delta$ include the rules\begin{align*}
\delta(\left[s_{i}\right]_{*},\left[*\right]_{1}) & =\left[s_{i,\ell_{i}}\right]_{\ell_{i}}\\
\delta(\left[s_{i}\right]_{*},\left[s_{i,j}\right]_{j}) & =\left[s_{i,j-1}\right]_{j-1}\end{align*}
taken over all $s_{i}\in\Sigma$, all $j\in\left\{ 2,3,\ldots,\ell_{i}\right\} $,
and all possibilites for the $*$'s. Note that this specification
of $\delta$ doesn't necessarily exhaust all possible pairs of $\left(\Sigma'\right)^{2}$,
however, any remaining pairs can be arbitrarily specified because
they are never used during the computation of $R$.

Finally, the deletion numbers are specified by\[
d\left(\left[s\right]_{j}\right)=\begin{cases}
0\mbox{,} & j>1\\
2\mbox{,} & j=1\end{cases}\]
for all $\left[s\right]_{j}\in\Sigma'$, and if $W_{0}=q_{1}q_{2}\ldots q_{m}$,
then $W_{0}'=\left[q_{1}\right]_{1}\left[q_{2}\right]_{1}\ldots\left[q_{m}\right]_{1}$.
\begin{example}
\label{exa:tagsyssim}
This example demonstrates a simulation of the tag system $T$ in Example
\ref{exa:tagsys} using a reverse tag system $R=\left(\Sigma',\delta,d,W_{0}'\right)$.

The production rules in Example \ref{exa:tagsys} are\[
a\rightarrow abb\qquad b\rightarrow c\qquad c\rightarrow a\mbox{.}\]
To properly simulate $T$, the three symbols $a_{3},b_{2},b_{1}$
are needed for the first rule, the symbol $c_{1}$ is needed for the
second, and the symbol $a_{1}$ is needed for the third. The initial
word for $R$ is $W_{0}'=a_{1}b_{1}c_{1}b_{1}$. Taking all these
symbols together, we have $\Sigma'=\left\{ a_{1},b_{1},c_{1},b_{2},a_{3}\right\} $.

If we take {}``$*$'' to mean {}``any symbol or subscript, as appropriate,''
the production rules $\delta$ can be written as follows:\begin{eqnarray*}
\left(a_{*},*_{1}\right)\rightarrow a_{3} & \left(b_{*},*_{1}\right)\rightarrow c_{1} & \left(c_{*},*_{1}\right)\rightarrow a_{1}\\
\left(a_{*},a_{3}\right)\rightarrow b_{2}\\
\left(a_{*},b_{2}\right)\rightarrow b_{1}\end{eqnarray*}
Finally, every symbol with a subscript of 1 gets a deletion number
of two, and zero otherwise: \[
d\left(a_{1}\right)=d\left(b_{1}\right)=d\left(c_{1}\right)=2\mbox{ and }d\left(b_{2}\right)=d\left(a_{3}\right)=0\mbox{.}\]

The output of $R$ is depicted below. Compare the marked rows with
the original output of $T$ in Example \ref{exa:tagsys}.

\global\long\def\exfourspace{\hspace{1.8em}}
%
\begin{quotation}
\noindent \raggedright{}\texttt{$\mathtt{a_{1}b_{1}c_{1}b_{1}}\leftarrow$}~\\
\texttt{$\mathtt{a_{1}b_{1}c_{1}b_{1}a_{3}}$}~\\
\texttt{$\mathtt{a_{1}b_{1}c_{1}b_{1}a_{3}b_{2}}$}~\\
\texttt{$\exfourspace\mathtt{c_{1}b_{1}a_{3}b_{2}b_{1}}\leftarrow$}~\\
\texttt{$\exfourspace\exfourspace\mathtt{a_{3}b_{2}b_{1}a_{1}}\leftarrow$}~\\
\texttt{$\exfourspace\exfourspace\mathtt{a_{3}b_{2}b_{1}a_{1}a_{3}}$}~\\
\texttt{$\exfourspace\exfourspace\mathtt{a_{3}b_{2}b_{1}a_{1}a_{3}b_{2}}$}~\\
\texttt{$\exfourspace\exfourspace\exfourspace\mathtt{b_{1}a_{1}a_{3}b_{2}b_{1}}\leftarrow$}~\\
\texttt{$\exfourspace\exfourspace\exfourspace\exfourspace\mathtt{a_{3}b_{2}b_{1}c_{1}}\leftarrow$}~\\
\texttt{$\exfourspace\exfourspace\exfourspace\exfourspace\mathtt{a_{3}b_{2}b_{1}c_{1}a_{3}}$}~\\
\texttt{$\exfourspace\exfourspace\exfourspace\exfourspace\mathtt{a_{3}b_{2}b_{1}c_{1}a_{3}b_{2}}$}~\\
\texttt{$\exfourspace\exfourspace\exfourspace\exfourspace\exfourspace\mathtt{b_{1}c_{1}a_{3}b_{2}b_{1}}\leftarrow$}~\\
\texttt{$\exfourspace\exfourspace\exfourspace\exfourspace\exfourspace\exfourspace\mathtt{a_{3}b_{2}b_{1}c_{1}}\leftarrow$}~\\
\texttt{$\exfourspace\exfourspace\exfourspace\exfourspace\exfourspace\exfourspace\cdots$}~\\

\end{quotation}


\end{example}
One point worth mentioning is that if a reverse tag system halts while
simulating an ordinary tag system, then the simulated tag system must
halt also. However, the converse is not true! A reverse tag system
might keep rolling once it's completed the simulation of a halting
tag system. The reverse tag system in Example \ref{exa:revtagsys} is a good example
of this; it can survive even when there's only one symbol, while ordinary
tag systems always require at least two.
\begin{theorem}
Let $T=\left(\Sigma,\Delta,W_{0}\right)$ be a tag system such that
each production rule of $\Delta$ yields a nonempty string, and let
$R$ be a reverse tag system constructed as above in terms of $T$.
Suppose $k>0$, $w_{1}\ldots w_{k}\in\Sigma^{*}$, and $\Delta\left(w_{1}\right)=z_{\ell}z_{\ell-1}\ldots z_{1}$.
If $i_{1},i_{2},\ldots,i_{k-1}$ are such that $\left[w_{j}\right]_{i_{j}}\in\Sigma'$,
then\[
\left[w_{1}\right]_{i_{1}}\ldots\left[w_{k-1}\right]_{i_{k-1}}\left[w_{k}\right]_{1}\vdash^{*}\left[w_{3}\right]_{i_{3}}\ldots
\left[w_{k}\right]_{1}\left[z_{\ell}\right]_{\ell}\ldots\left[z_{1}\right]_{1}\]
in $R$.\end{theorem}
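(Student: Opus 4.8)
The plan is to unfold the computation of $R$ one step at a time and check that exactly $\ell$ applications of $\vdash$ carry the left-hand word to the right-hand word. Write $w_1=s_i$, so that in the notation of the construction $\Delta(w_1)=\Delta(s_i)=s_{i,\ell_i}\cdots s_{i,1}$ with $\ell=\ell_i$ and $z_j=s_{i,j}$ for $1\le j\le\ell$; in particular each symbol $[z_j]_j=[s_{i,j}]_j$ lies in $\Sigma'$ by construction, so every word written below is a genuine word over $\Sigma'$. It suffices to treat $k\ge 2$, which is the regime used in the simulation (for $k\le 1$ the underlying tag configuration is already halted, so nothing needs to be said).

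I would first establish by induction on $m$ that, for $1\le m\le\ell-1$ (a nonempty range only when $\ell\ge 2$), after $m$ steps of $R$ the running word equals
\[
[w_1]_{i_1}\cdots[w_{k-1}]_{i_{k-1}}[w_k]_1\,[z_\ell]_\ell[z_{\ell-1}]_{\ell-1}\cdots[z_{\ell-m+1}]_{\ell-m+1}.
\]
For $m=1$ the first symbol of the word is $[w_1]_{i_1}=[s_i]_{i_1}$ and the last is $[w_k]_1$, so the rule $\delta([s_i]_*,[*]_1)=[s_{i,\ell}]_\ell=[z_\ell]_\ell$ fires, and since $\ell\ge 2$ we have $d([z_\ell]_\ell)=0$, so nothing is deleted. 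For the inductive step no deletion has yet occurred, so the word still begins with $[s_i]_{i_1}$ and now ends with $[z_{\ell-m+1}]_{\ell-m+1}=[s_{i,j}]_j$ where $j=\ell-m+1$ lies in $\{3,\ldots,\ell\}$; hence $\delta([s_i]_*,[s_{i,j}]_j)=[s_{i,j-1}]_{j-1}$ fires and appends $[z_{j-1}]_{j-1}=[z_{\ell-m}]_{\ell-m}$, which again has subscript $\ge 2$ and so deletion number $0$. After $\ell-1$ steps the word thus ends with $[z_2]_2$, and one more step applies $\delta([s_i]_*,[s_{i,2}]_2)=[s_{i,1}]_1=[z_1]_1$, whose deletion number is $2$; appending $[z_1]_1$ and then deleting the first two symbols $[w_1]_{i_1}$ and $[w_2]_{i_2}$ from
\[
[w_1]_{i_1}[w_2]_{i_2}[w_3]_{i_3}\cdots[w_k]_1[z_\ell]_\ell\cdots[z_1]_1
\]
leaves exactly $[w_3]_{i_3}\cdots[w_k]_1[z_\ell]_\ell\cdots[z_1]_1$, which is the claim. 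When $\ell=1$ there is no appending phase and a single step suffices: $\delta([s_i]_*,[*]_1)=[s_{i,1}]_1=[z_1]_1$ is applied and then the first two symbols are deleted.

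The arithmetic here is routine; the point that needs care is to confirm that at each of the $\ell$ micro-steps it is one of the rules \emph{explicitly listed} in the construction of $\delta$ that fires, and never one of the pairs left unspecified. This is precisely what the {}``no deletion before the last step'' invariant buys: it pins the first symbol of the running word to $[w_1]_{i_1}=[s_i]_*$ until the final step, while the last symbol runs through $[w_k]_1,[z_\ell]_\ell,[z_{\ell-1}]_{\ell-1},\ldots,[z_2]_2$ in turn, each of which is of the form $[*]_1$ or $[s_{i,j}]_j$ with $2\le j\le\ell_i$ and lies in $\Sigma'$ --- exactly the left-hand sides of the listed rules. Once the first/last symbol pair is identified at each step and the deletion numbers are read off from their subscripts, the derivation follows immediately.
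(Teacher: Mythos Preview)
Your argument is correct and follows the same route as the paper: unfold the reverse tag computation step by step, using the rule $\delta([s_i]_*,[*]_1)$ once and then the rules $\delta([s_i]_*,[s_{i,j}]_j)$ for $j=\ell,\ldots,2$, observing that no deletion occurs until the final appended symbol has subscript~$1$. The paper simply displays the resulting $\ell$-step derivation without commentary; your write-up adds the explicit induction, the separate treatment of $\ell=1$, the remark that $k\ge 2$ is the relevant regime, and the verification that every pair encountered is among the explicitly defined rules of~$\delta$---all of which are implicit in the paper's display but not spelled out there.
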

\begin{proof}
We have by construction of $R$ that\begin{align*}
\left[w_{1}\right]_{i_{1}}\ldots\left[w_{k-1}\right]_{i_{k-1}}\left[w_{k}\right]_{1} & \vdash\left[w_{1}\right]_{i_{1}}\ldots\left[w_{k-1}\right]_{i_{k-1}}\left[w_{k}\right]_{1}\left[z_{\ell}\right]_{\ell}\\
 & \vdash\left[w_{1}\right]_{i_{1}}\ldots\left[w_{k-1}\right]_{i_{k-1}}\left[w_{k}\right]_{1}\left[z_{\ell}\right]_{\ell}\left[z_{\ell-1}\right]_{\ell-1}\\
 & \vdots\\
 & \vdash\left[w_{1}\right]_{i_{1}}\ldots\left[w_{k-1}\right]_{i_{k-1}}\left[w_{k}\right]_{1}\left[z_{\ell}\right]_{\ell}
 \left[z_{\ell-1}\right]_{\ell-1}\ldots\left[z_{2}\right]_{2}\\
 & \vdash\left[w_{3}\right]_{i_{3}}\ldots\left[w_{k-1}\right]_{i_{k-1}}\left[w_{k}\right]_{1}\left[z_{\ell}\right]_{\ell}
 \left[z_{\ell-1}\right]_{\ell-1}\ldots\left[z_{2}\right]_{2}\left[z_{1}\right]_{1}\mbox{.}\end{align*}
\qed
\end{proof}

\section{Simulating a Reverse Tag System with a Recurrence}

While it's possible to describe how the recurrence $A$ simulates
a reverse tag system, a better approach is to introduce another, simpler
recurrence $B$ which does this simulation, then show how $A$ reduces
to $B$. The simpler recurrence, without initial conditions, is:
\[
B\left(n\right)=
\begin{cases}
B\left(n-2\right)+2B\left(B\left(n-1\right)\right), & \mbox{if \ensuremath{n} is even}\\
B\left(2B\left(n-2-B\left(n-1\right)\right)+B\left(n-2\right)\right), & \mbox{if \ensuremath{n} is odd.}
\end{cases}
\]

Consider a reverse tag system $R=\left(\Sigma,\delta,d,W_{0}\right)$.
The simulation of $R$ by $B$ necessitates encoding $\delta$ and
$d$ as initial conditions of $B$. In order to do this, every symbol
in $\Sigma$ and every possible pair in $\Sigma^{2} = \Sigma \times \Sigma$ must be represented
by a unique integer. Then, invoking $B$ on such an integer would
correspond to evaluating $\delta$ or $d$, whatever the case may
be. In order to avoid conflicts doing this, any integer representation
of symbols and symbol pairs $\alpha:\Sigma\cup\Sigma^{2}\rightarrow\mathbb{N}$
must be injective.

Assuming $\Sigma=\left\{ s_{1},s_{2},\ldots,s_{t}\right\} $, one
such injection is defined as follows:
\begin{align*}
\alpha\left(s_{i}\right) & =4^{i+1}+2 = 2^{2i+2}+2, \text{ and } \\
\alpha\left(s_{i},s_{j}\right) & =2\alpha\left(s_{i}\right)+\alpha\left(s_{j}\right)
  = 2^{2i+3} + 2^{2j+2} + 6 \mbox{.}
  \end{align*}

The fact that $\alpha$ is injective can be seen by considering the
binary representation of such numbers. Each of the bitstrings of $\alpha\left(s_{1}\right),\ldots,\alpha\left(s_{t}\right)$
are clearly distinct from one another, and the bitstring of $\alpha\left(s_{i},s_{j}\right)$
for any $i,j\in\left\{ 1,2,\ldots,t\right\} $ {}``interleaves''
the bitstrings of $\alpha\left(s_{i}\right)$ and $\alpha\left(s_{j}\right)$.
The constant 2 term in the definition of $\alpha$ is important in
the next section, when the $A$ recurrence is considered.

The initial conditions of $B$ are constructed so that the encoding
of $d$ occurs on $\alpha\left(\Sigma\right)$, and the encoding of
$\delta$ occurs on $\alpha\left(\Sigma^{2}\right)$. For $i,j\in\left\{ 1,2,\ldots,t\right\} $,
The encoding for $d$ and $\delta$ is done respectively as follows:
\begin{align}
B\left(\alpha\left(s_{i}\right)\right) & =1-d\left(s_{i}\right) \label{eq:Bd} \\
B\left(\alpha\left(s_{i},s_{j}\right)\right) & =\alpha\left(\delta\left(s_{i},s_{j}\right)\right). \notag
\end{align}
Is it worth noting that because of (\ref{eq:Bd}), $B(n)$ can take on negative values.

The largest value attained by $\alpha$ is \[
\alpha\left(s_{t},s_{t}\right)=3\alpha\left(s_{t}\right)=3\cdot4^{t+1}+6\mbox{.}\]
Let $c_{0}=\alpha\left(s_{t},s_{t}\right)+2$. For the remainder of
initial conditions that appear before $c_{0}$ and don't represent
a symbol or symbol pair under $\alpha$, $B$ is assigned zero. One
observes that even though the number of initial conditions specified
is exponential in the size of $\Sigma$, only a polynomial number
of these are actually nonzero.

The way the $B$ recurrence simulates $R$ is that $R$'s computation
string, as represented under $\alpha$, is recorded on the odd integers,
while the length of the running word is recorded on the even integers.
Thus, for large enough $n$, the pair $\left(B\left(2n+1\right),B\left(2n+2\right)\right)$
represents exactly one step of $R$'s computation. The simulation
begins with the initial word $W_{0}=q_{1}q_{2}\ldots q_{m}$. Specifically,
the $m$ integers $\alpha\left(q_{1}\right),\ldots,\alpha\left(q_{m}\right)$
are placed on the first $m$ odd integers that come after $c_{0}$.
The value $2m-2$ is then immediately placed after the last symbol
of $W_{0}$; it is the last initial condition of $B$ and signifies
the length of the initial word. Beyond this point, the recurrence
of $B$ takes effect. An illustration of these initial conditions
is given in the table below.

\begin{table}
\begin{center}
\begin{tabular}{|c||c|c|c|c|c|c|c|c|c|}
\hline
$B\left(c_{0}+k\right)$ & $\alpha\left(q_{1}\right)$ & $0$ & $\alpha\left(q_{2}\right)$ & $0$ & $\ldots$ & $\alpha\left(q_{m-1}\right)$ & $0$ & $\alpha\left(q_{m}\right)$ & $2m-2$\tabularnewline
\hline
$k$ & $1$ & $2$ & $3$ & $4$ & $\ldots$ & $2m-3$ & $2m-2$ & $2m-1$ & $2m$\tabularnewline
\hline
\end{tabular}
\end{center}
\end{table}

We now formalize what is meant by {}``$B$ simulates $R$.'' As
mentioned previously, $B$ will alternatingly output symbols and word
lengths. We encode the symbols and word lengths produced by $B$ in
the following manner: any symbol $s\in\Sigma$ is encoded as the integer
$\alpha\left(s\right)$, while the length $k$ of some computed word
is recorded in the output of $B$ as the value $2k-2$.

Suppose that at the $i$\textsuperscript{th} computation step of
$R$, the word $W=w_{1}w_{2}\ldots w_{k}$ is produced. We will say
that $B$\emph{ computes the $i$\textsuperscript{th} step of $R$
at} $n$ if the following equalities hold:\begin{align*}
\left(B\left(n-2k+1\right),\ldots,B\left(n-3\right),B\left(n-1\right)\right) & =\left(\alpha\left(w_{1}\right),\alpha\left(w_{2}\right),\ldots,\alpha\left(w_{k}\right)\right)\\
B\left(n\right) & =2k-2\mbox{.}\end{align*}
This terminology is justified, since if $B$ computes the \emph{$i$}\textsuperscript{th}\emph{
}step of $R$ at $n$, then these equalities allow $W$ to be reconstructed
from the output of $B$ near $n$. If there exist constants $r,s$
such that for all $i\in\mathbb{N}$, $B$ computes the \emph{$i$\textsuperscript{th}}
step of $R$ at $ri+s$ whenever step $i$ exists, then we will say that $B$ \emph{simulates}
$R$.
\begin{theorem}
With the above initial conditions, $B$ simulates $R=\left(\Sigma,\delta,d,W_{0}\right)$.
\end{theorem}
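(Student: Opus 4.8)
The plan is to argue by induction on the step index $i$, proving that $B$ computes the $i$th step of $R$ at the position $n_i:=c_0+2m+2i$; the constants witnessing the simulation will then be $r=2$ and $s=c_0+2m$. So that the induction is self-contained I would take the inductive hypothesis $P(i)$ to assert, besides the conclusion that $B$ computes step $i$ of $R$ at $n_i$, that $B$ is well-defined (hence calculable) on all of $\{0,1,\ldots,n_i\}$, that the encodings of $d$ and $\delta$ from (\ref{eq:Bd}) still occupy $\{0,\ldots,c_0\}$ untouched, that the odd positions of $\{c_0+1,\ldots,n_i\}$ carry the $\alpha$-encoded computation string of $R$'s first $i$ steps, and that the even positions $n_0,n_1,\ldots,n_i$ carry the word lengths $2k_0-2,2k_1-2,\ldots,2k_i-2$, where $k_j$ denotes the length of $R$'s word at step $j$. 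The base case $P(0)$ can be read directly off the table of initial conditions, namely $B(c_0+2j-1)=\alpha(q_j)$ for $1\le j\le m$ and $B(c_0+2m)=2m-2$, which is exactly the statement that $B$ computes step $0$ --- the word $W_0=q_1\cdots q_m$, of length $k_0=m$ --- at $n_0=c_0+2m$. If $R$ halts at or before step $i$ then step $i+1$ does not exist and nothing further is required, so the induction need only be carried out along the steps that $R$ actually performs.

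For the inductive step, assume $P(i)$ and that step $i+1$ of $R$ exists. Write $R$'s word at step $i$ as $w_1\cdots w_k$ with $k=k_i$, set $y=\delta(w_1,w_k)$, and recall that step $i+1$ yields the word $w_{d(y)+1}\cdots w_k\,y$ of length $k_{i+1}=k-d(y)+1\ge1$. First I would compute $B(n_i+1)$ from the odd case of the recurrence; unwinding it gives $B(n_i+1)=B\bigl(2B(n_i-1-B(n_i))+B(n_i-1)\bigr)$, and this is where the arithmetic of the encoding is tuned to fit: because $B(n_i)=2k-2$ we have $n_i-1-B(n_i)=n_i-2k+1$, which by $P(i)$ is the position holding $\alpha(w_1)$, while $B(n_i-1)=\alpha(w_k)$, so the inner argument equals $2\alpha(w_1)+\alpha(w_k)=\alpha(w_1,w_k)$ and the matching initial condition gives $B(n_i+1)=\alpha(\delta(w_1,w_k))=\alpha(y)$. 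Next I would compute $B(n_i+2)$ from the even case: $B(n_i+2)=B(n_i)+2B(B(n_i+1))=(2k-2)+2B(\alpha(y))=(2k-2)+2(1-d(y))=2k_{i+1}-2$. It then remains to observe that the odd positions $n_i+2-2k_{i+1}+1,\ldots,n_i+1$ carry precisely $\alpha(w_{d(y)+1}),\ldots,\alpha(w_k),\alpha(y)$ --- the first $k_{i+1}-1$ of these being inherited unchanged from $P(i)$, the last being the value just computed --- so that, together with $B(n_i+2)=2k_{i+1}-2$, the word of step $i+1$ is recorded exactly as required and $B$ computes step $i+1$ at $n_{i+1}=n_i+2$; since $B$'s values on $\{0,\ldots,n_i\}$ were not disturbed, the remaining clauses of $P(i+1)$ hold as well.

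The part that needs genuine care --- and the main obstacle --- is verifying that every invocation of $B$ met while computing $B(n_i+1)$ and $B(n_i+2)$ has a non-negative integer argument, strictly smaller than the index being computed, and lying in the range already pinned down by $P(i)$, so that the recurrence really is calculable there. Concretely one needs $n_i-2k_i+1\ge c_0+1>0$, which holds because $R$'s word at step $i$ is a suffix of a computation string of length $m+i$ and hence $k_i\le m+i$; and one needs $\alpha(w_1,w_k)\le\alpha(s_t,s_t)=c_0-2<n_i$ together with $\alpha(y)\le c_0-2<n_i$, so that the nested calls land either on the encoding initial conditions (\ref{eq:Bd}) or on positions governed by $P(i)$. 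Beyond these inequalities and the identity $n_i-1-B(n_i)=n_i-2k_i+1$ --- precisely what makes the $2k-2$ encoding of word lengths mesh with the offset $n-2-B(n-1)$ in the odd case of the recurrence --- the remainder is routine bookkeeping on positions and parities.
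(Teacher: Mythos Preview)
Your proof is correct and follows essentially the same inductive argument as the paper: the base case is read off from the table of initial conditions at $c_0+2m$, and the inductive step unwinds the odd branch of the recurrence to obtain $B(n_i+1)=\alpha(y)$ and the even branch to obtain $B(n_i+2)=2(k-d(y))$. The paper's version is terser---it does not spell out the range checks ensuring each nested call lands on a previously determined value---so your added verification that $k_i\le m+i$ and that $\alpha(w_1,w_k),\alpha(y)\le c_0-2$ is a welcome elaboration rather than a departure.
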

\begin{proof}
If we suppose that the 0\textsuperscript{th} step of $R$ yields
the initial word $W_{0}$, then by Table 1 it is clear that $B$ computes
the 0\textsuperscript{th} step of $R$ at $c_{0}+2m$.

Assume that $B$ computes the $i$\textsuperscript{th} step of $R$
at $2n$, where, again, we assume the word produced at step $i$ is
$w_{1}w_{2}\ldots w_{k}$. We would like to show that $B$ computes
the $\left(i+1\right)$\textsuperscript{th} step of $R$ at $2n+2$.
Showing this, by induction, would prove the theorem.

If $y=\delta\left(w_{1},w_{k}\right)$, then the word produced by
$R$ at step $i+1$ is $w_{d\left(y\right)+1}\ldots w_{k}y$. The
last symbol of this word is $y$ and length of this word is $k+1-d\left(y\right)$.
Therefore, to prove the theorem, we need only show that \begin{align*}
B\left(2n+1\right) & =\alpha\left(y\right)\\
B\left(2n+2\right) & =2\left(k+1-d\left(y\right)\right)-2\\
 & =2\left(k-d\left(y\right)\right)\mbox{.}\end{align*}
We first consider the point $2n+1$. Since this point is odd, we have\begin{align*}
B\left(2n+1\right) & =B\left(2B\left(2n-1-B\left(2n\right)\right)+B\left(2n-1\right)\right)\\
 & =B\left(2B\left(2n-1-2\left(k-1\right)\right)+B\left(2n-1-2\left(k-k\right)\right)\right)\\
 & =B\left(2\alpha\left(w_{1}\right)+\alpha\left(w_{k}\right)\right)\\
 & =B\left(\alpha\left(w_{1},w_{k}\right)\right)\\
 & =\alpha\left(\delta\left(w_{1},w_{k}\right)\right)\\
 & =\alpha\left(y\right)\mbox{.}\end{align*}
The point $2n+2$ is even, thus\begin{align*}
B\left(2n+2\right) & =B\left(2n\right)+2B\left(B\left(2n+1\right)\right)\\
 & =2k-2+2B\left(\alpha\left(y\right)\right)\\
 & =2k-2+2\left(1-d\left(y\right)\right)\\
 & =2\left(k-d\left(y\right)\right)\mbox{.}\end{align*}
\qed
\end{proof}

The above theorem describes the behaviour of $B$ when $R$ does not halt.
If $R$ halts at any point, then there exists some even $n$ such that $B\left(n\right)=-2$.
Then, $B\left(n+1\right)=B\left(2B\left(n+1\right)+B\left(n-1\right)\right)$, and so $B$
is not calculable. Thus, $B$ with the prescribed initial conditions is calculable if and
only if $R$ does not halt.

\section{Reducing $A$ to $B$}

It remains to show that the output of $B$ is effectively the same
as the output of the recurrence
\begin{align}
A\left(n\right) = & A\left(n-4-A\left(A\left(n-4\right)\right)\right)+4A\left(A\left(n-2\right)\right) \notag \\
 & +A\left(2A\left(n-4-A\left(n-2\right)\right)+A\left(n-4\right)\right)\mbox{,}
\label{eq:Arecur}
\end{align}
given the right initial conditions.

Once more, suppose we have a reverse tag system $R=\left(\Sigma,\delta,d,W_{0}\right)$.
One restriction that will be made on $R$ is that $d\left(\Sigma\right)=\left\{ 0,2\right\} $.
Section 3 demonstrated how, despite this restriction, $R$ can still
simulate an ordinary tag system. The goal at the beginning of these
notes, to show that $A$ is Turing complete, is therefore still in
reach.

Assume there are $t$ symbols in $\Sigma$, and $m$ symbols in the
initial word $W_{0}$. Let $c_{0}=\alpha\left(s_{t},s_{t}\right)+2$,
as before. We now specify the initial conditions of $A$. For $n=0,1,\ldots,c_{0}$,
$A$ and $B$ will share the same initial conditions. Immediately
after, we'll have\begin{equation}
A\left(c_{0}+4n+j\right)=\begin{cases}
0\mbox{,} & j=0,2\\
B\left(c_{0}+2n+1\right)\mbox{,} & j=1\\
2B\left(c_{0}+2n+2\right)\mbox{,} & j=3\end{cases}\label{eq:An2}\end{equation}
for $0\leq n<m$ and $0\leq j<4$.

The next theorem demonstrates how to obtain the sequence $B$ from $A$.
\begin{theorem}
Using the given initial conditions for $A$ and $B$, $A$ is calculable if and only if $B$
is calculable. If $B$ is calculable, then equation \emph{(\ref{eq:An2}) }holds for all $n\geq0$.
\label{thm:BfromA}
\end{theorem}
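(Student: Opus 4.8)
The plan is to establish both assertions at once, by strong induction on $n$: assuming (\ref{eq:An2}) holds for all smaller values of $n$ and that $B$ is defined on every argument that will be needed, I would evaluate the four values $A(c_0+4n),\,A(c_0+4n+1),\,A(c_0+4n+2),\,A(c_0+4n+3)$ in that order from (\ref{eq:Arecur}), feeding each computed value into the next, and verify that they agree with (\ref{eq:An2}) while every intermediate call to $A$ lands on an argument in $\{0,1,\ldots,c_0+4n+j-1\}$. The base case $0\le n<m$ is exactly the stipulated list of initial conditions; and since the recurrence for $A$ first takes effect at $c_0+4m$, the inductive step concerns $n\ge m$. Throughout one uses that $A$ and $B$ agree on all arguments $\le c_0$, so in particular $A(0)=0$, $A(\alpha(s_i))=B(\alpha(s_i))=1-d(s_i)$, and $A(\alpha(s_i,s_j))=B(\alpha(s_i,s_j))=\alpha(\delta(s_i,s_j))$.

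Before the main computation I would record three elementary facts that absorb the bookkeeping. (i) $c_0\equiv 0\pmod 4$, whereas every $\alpha(s_i)$ and every $\alpha(s_i,s_j)$ is $\equiv 2\pmod 4$; hence an argument of $A$ divisible by $4$ is never an encoded index, and $A$ vanishes there --- either as a ``filler'' initial condition below $c_0$, or by the $j=0$ clause of the induction hypothesis at or above $c_0$. (ii) $2\alpha(s_t)<c_0$ and $\alpha(s_i,s_j)\le\alpha(s_t,s_t)=c_0-2$, so any quantity $2\alpha(u)+\alpha(v)=\alpha(u,v)$ with $u,v\in\Sigma$ stays below $c_0$, on the shared initial condition that encodes $\delta$. (iii) For $n\ge m$ the value $B(c_0+2n)=2\ell-2$ records the length $\ell$ of the word $w_1\ldots w_\ell$ of $R$ stored on the odd indices just below $c_0+2n$, and $\ell\le n$ because each step of $R$ changes the length by $\pm 1$ and $B$ computes step $i$ at $c_0+2m+2i$; consequently $2B(c_0+2n)=4\ell-4<c_0+4n$ remains within reach of the induction, while the odd-index values of $B$ are symbol encodings $\alpha(w)$ with $A(\alpha(w))=1-d(w)\in\{-1,1\}$, using $d(\Sigma)=\{0,2\}$.

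Granting these, the four computations in the inductive step become mechanical. For $j=0$ and $j=2$ every subexpression of (\ref{eq:Arecur}) collapses to $A(0)=0$, giving $A(c_0+4n)=A(c_0+4n+2)=0$. For $j=1$: the first summand vanishes because $A(A(c_0+4n-3))=1-d(w_\ell)\in\{1,-1\}$ carries its argument to $c_0+4(n-1)$ or $c_0+4(n-1)+2$, where $A$ is $0$ by the induction hypothesis; the middle summand vanishes because $A(c_0+4n-1)=2B(c_0+2n)\equiv 0\pmod 4$; and the last summand, using $B(c_0+2n-1-B(c_0+2n))=B(c_0+2(n-\ell)+1)=\alpha(w_1)$ together with $A(c_0+4n-3)=B(c_0+2n-1)=\alpha(w_\ell)$, simplifies to $A\bigl(2\alpha(w_1)+\alpha(w_\ell)\bigr)=B(\alpha(w_1,w_\ell))=\alpha(\delta(w_1,w_\ell))$, which by the odd case of the $B$-recurrence is precisely $B(c_0+2n+1)$. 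For $j=3$, writing $y=\delta(w_1,w_\ell)$: the first summand equals $2B(c_0+2n)$, the middle summand equals $4A(\alpha(y))=4\bigl(1-d(y)\bigr)=4B\bigl(B(c_0+2n+1)\bigr)$, the last summand again vanishes, and the total is $2\bigl(B(c_0+2n)+2B(B(c_0+2n+1))\bigr)=2B(c_0+2n+2)$ by the even case of the $B$-recurrence --- exactly (\ref{eq:An2}). At every step the argument handed to $A$ is non-negative and strictly below the index being computed, so ``$B$ calculable'' forces (\ref{eq:An2}) for all $n\ge 0$ and makes $A$ calculable.

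For the converse I would argue contrapositively. If $B$ is not calculable then, by the remark closing the previous section, $R$ halts, so there is an even index $c_0+2n_0$ with $B(c_0+2n_0)=-2$ and $B$ undefined immediately beyond it. The induction above is still valid up to $n=n_0-1$, so (\ref{eq:An2}) gives $A(c_0+4n_0-1)=2B(c_0+2n_0)=-4$; then evaluating $A(c_0+4n_0+1)$ from (\ref{eq:Arecur}) requires the term $4A\bigl(A(c_0+4n_0-1)\bigr)=4A(-4)$, an application of $A$ to a negative integer, so $A$ is not calculable. Together with the forward direction this yields the stated equivalence. I expect the real obstacle to be the term-by-term matching for $j=1$ and $j=3$: to know which clause --- a shared initial condition, the induction hypothesis, or the $\delta$-encoding --- governs a given intermediate argument, one must track its residue modulo $4$, its sign, and its magnitude relative to the current index simultaneously, and facts (i) and (iii) are exactly what keeps this under control.
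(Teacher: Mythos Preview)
Your proposal is correct and follows essentially the same route as the paper: a strong induction that first disposes of the even residues $j=0,2$, then carries out the term-by-term evaluation of (\ref{eq:Arecur}) for $j=1$ and $j=3$, with the non-calculability direction obtained from the appearance of $A(-4)$ once $B(c_0+2n_0)=-2$. The only difference is organizational---the paper isolates the even case as a separate preliminary induction, whereas you fold all four residues into one pass---and your recorded facts (i)--(iii) are exactly the bounds and congruences the paper invokes in its $j=3$ argument.
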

\begin{proof}
We first consider the case when the argument of $A$ is even, that
is, $j$ equals 0 or 2. By the initial conditions,
\[
A\left(c_{0}+4m-2\right)=A\left(c_{0}+4m-4\right)=0\mbox{.}
\]

Suppose $A\left(y-2\right)=A\left(y-4\right)=0$ for some $y\geq c_{0}+4m$
where $y\equiv0\mbox{ mod }2$. Then $A\left(y\right)=0$ since all
three terms of $A\left(y\right)$ are zero:\begin{align*}
A\left(y-4-A\left(A\left(y-4\right)\right)\right) & =A\left(y-4-A\left(0\right)\right)\\
 & =A\left(y-4\right)\\
 & =0\mbox{.}\end{align*}
\begin{align*}
4A\left(A\left(y-2\right)\right) & =4A\left(0\right)\\
 & =0\mbox{.}\end{align*}
\begin{align*}
A\left(2A\left(y-4-A\left(y-2\right)\right)+A\left(y-4\right)\right) & =A\left(2A\left(y-4\right)+A\left(y-4\right)\right)\\
 & =A\left(3A\left(y-4\right)\right)\\
 & =A\left(0\right)\\
 & =0\mbox{.}\end{align*}

Because every number in $\alpha\left(\Sigma\cup\Sigma^{2}\right)$
is congruent to $2\mbox{ mod }4$, we have $A\left(4n\right)=0$ for
all $n\in\mathbb{N}$ whenever $A\left(4n\right)$ is defined.

Now, let $n\geq m$, and let $y=c_{0}+4n+1$. Suppose that\begin{align*}
A\left(y-2\right) & =2B\left(c_{0}+2n\right)\\
A\left(y-4\right) & =B\left(c_{0}+2n-1\right)\mbox{.}\end{align*}
If $k$ is the length of the word produced in the $\left(n-m\right)$\textsuperscript{th}
step of $R$'s computation, then $B\left(c_{0}+2n\right)=2k-2$. If
$k=0$, then $R$ halts at this step. When this happens, $A\left(y\right)$
cannot be computed since \[
A\left(y\right)=4A\left(A\left(y-2\right)\right)+\cdots=4A\left(4k-4\right)+\cdots=4A\left(-4\right)+\cdots\mbox{,}\]
and $A$ is not defined on negative integers. Hence, if $R$ halts, then $A$ with the
given initial conditions is not calculable. 

For the remainder of the proof, then, assume $R$ never halts, that is, $k\geq1$.
The length of the word on $\left(n-m\right)$\textsuperscript{th}
step can be no longer than $m+\left(n-m\right)=n$, thus $k\leq n$.

If $s$ is the last symbol of the word at the $\left(n-m\right)$\textsuperscript{th}
step of $R$ (which exists since $k\geq1$), then \[
B\left(c_{0}+2n-1\right)=\alpha\left(s\right)=4^{i}+2\]
for some $i\geq2$.

We next show the first term of $A\left(y\right)$ is zero:\begin{align*}
A\left(y-4-A\left(A\left(y-4\right)\right)\right) & =A\left(y-4-A\left(\alpha\left(s\right)\right)\right)\\
 & =A\left(y-4-B\left(\alpha\left(s\right)\right)\right)\\
 & =A\left(y-4-\left(1-d\left(s\right)\right)\right)\mbox{.}\end{align*}
Since $d\left(s\right)\in\left\{ 0,2\right\} $, this is equal to
either $A\left(y-3\right)$ or $A\left(y-5\right)$. However, since
$y$ is odd, both of these are equal to zero.

Since $A$ vanishes on multiples of $4$, the second term of $A\left(y\right)$
also vanishes: \begin{align*}
4A\left(A\left(y-2\right)\right) & =4A\left(2B\left(c_{0}+2n\right)\right)\\
 & =4A\left(4k-4\right)\\
 & =0\mbox{.}\end{align*}
For the last term, we have\begin{align*}
 & A\left(2A\left(y-4-A\left(y-2\right)\right)+A\left(y-4\right)\right)\\
 & =A\left(2A\left(y-4-\left(4k-4\right)\right)+A\left(y-4\right)\right)\\
 & =A\left(2A\left(c_{0}+4\left(n-k\right)+1\right)+A\left(y-4\right)\right)\\
 & =A\left(2B\left(c_{0}+2\left(n-k\right)+1\right)+B\left(c_{0}+2n-1\right)\right)\\
 & =A\left(2B\left(c_{0}+2n-1-\left(2k-2\right)\right)+B\left(c_{0}+2n-1\right)\right)\\
 & =A\left(2B\left(c_{0}+2n-1-B\left(c_{0}+2n\right)\right)+B\left(c_{0}+2n-1\right)\right)\\
 & =B\left(2B\left(c_{0}+2n-1-B\left(c_{0}+2n\right)\right)+B\left(c_{0}+2n-1\right)\right)\\
 & =B\left(c_{0}+2n+1\right)\mbox{.}\end{align*}
Therefore, $A\left(y\right)=A\left(c_{0}+4n+1\right)=B\left(c_{0}+2n+1\right)$.

It remains to consider the case $j=3$. As before, let $n\geq m$
and this time let $y=c_{0}+4n+3$. Assume\begin{align*}
A\left(y-2\right) & =B\left(c_{0}+2n+1\right)\\
A\left(y-4\right) & =2B\left(c_{0}+2n\right)\mbox{.}\end{align*}
We wish to show that $A\left(y\right)=2B\left(c_{0}+2n+2\right)$.
Again, $k$ is the length of the word after $n-m$ steps of $R$,
so that $B\left(c_{0}+2n\right)=2k-2$. $k$ must be larger than zero,
otherwise $A\left(y-2\right)$ would not be well-defined. The number
$c_{0}+2n+1$ is odd, so $B\left(c_{0}+2n+1\right)=\alpha\left(s'\right)=4^{i}+2$
for some $s'\in\Sigma$, $i\geq2$.

The first term of $A\left(y\right)$ is\begin{align*}
A\left(y-4-A\left(A\left(y-4\right)\right)\right) & =A\left(y-4-A\left(4k-4\right)\right)\\
 & =A\left(y-4\right)\\
 & =2B\left(c_{0}+2n\right)\mbox{.}\end{align*}

The second term of $A\left(y\right)$ is\begin{align*}
4A\left(A\left(y-2\right)\right) & =4A\left(B\left(c_{0}+2n+1\right)\right)\\
 & =4A\left(\alpha\left(s'\right)\right)\\
 & =4B\left(\alpha\left(s'\right)\right)\\
 & =4B\left(B\left(c_{0}+2n+1\right)\right)\mbox{.}\end{align*}

Finally, it remains to show that the third term of $A\left(y\right)$
is zero. The quantity \begin{align*}
h & :=y-4-A\left(y-2\right)\\
 & =y-4-\alpha\left(s'\right)\\
 & =y-4-\left(4^{i}+2\right)\\
 & =c_{0}+4\left(n-4^{i-1}-1\right)+1\end{align*}
is larger than zero, smaller than $y$, and is congruent to $1\mbox{ mod }4$.
Thus, $A\left(h\right)=0$ if $h\leq c_{0}$, and $A\left(h\right)\in\alpha\left(\Sigma\right)$
if $h>c_{0}$. In either case, $A\left(h\right)$ is even and $0\leq A\left(h\right)\leq\max\alpha\left(\Sigma\right)$.
Since $0<k\leq n$, we have\[
2A\left(h\right)+A\left(y-4\right)\geq0\]
and\begin{align*}
2A\left(h\right)+A\left(y-4\right) & =2A\left(h\right)+4k-4\\
 & \leq2\max\alpha\left(\Sigma\right)+4k-4\\
 & <c_{0}+4n+3\\
 & =y\mbox{.}\end{align*}
Because $2A\left(h\right)+A\left(y-4\right)\equiv0\mbox{ mod }4$,
$A\left(2A\left(h\right)+A\left(y-4\right)\right)=0$. Therefore,\begin{align*}
A\left(y\right) & =2B\left(c_{0}+2n\right)+4B\left(B\left(c_{0}+2n+1\right)\right)\\
 & =2B\left(c_{0}+2n+2\right)\mbox{.}\end{align*}
\qed
\end{proof}

\section{Concluding Remarks}

In this paper, we have shown the existence of an undecidable
  nested recurrence relation.
Furthermore, like its more well known cousins (\ref{eq:HofQ}), (\ref{eq:HofC}), (\ref{eq:HofV}) and (\ref{eq:HofG}), our
  recurrence relation (\ref{eq:Arecur}) is formed only from the
  operations of addition, subtraction, and recursion.
Thus the result lends support to the idea that, in general, it will be difficult to prove broad
  results about nested recurrence relations.
It will be interesting to try to determine whether other nested recurrence relations,
  such as (\ref{eq:HofQ}), are decidable or not.
If it is undecidable then it will certainly involve extending the techniques that are presented
  here, since the form of the recursion seems to prevent lookups in the manner we used.

\bibliography{undecidable_no_appendix}{}
\bibliographystyle{splncs03}

\end{document}